\theoremstyle{plain}
\newtheorem{Pocz}{Poczatek}[section]
\newtheorem{Proposition}[Pocz]{Proposition}
\newtheorem{Theorem}[Pocz]{Theorem}
\newtheorem{Lemma}[Pocz]{Lemma}
\newtheorem{Observation}[Pocz]{Observation}
\theoremstyle{definition}
\theoremstyle{remark}
\numberwithin{equation}{section}
\author{Thomas ~ Weighill}
\address{University of Tennessee, Knoxville, USA}
\email{tweighil@vols.utk.edu}
\title{A characterization of Gromov hyperbolicity via quasigeodesic subspaces}
\date{ \today
}
\keywords{}
\begin{document}


\begin{abstract}
By a geodesic subspace of a metric space $X$ we mean a subset $A$ of $X$ such that any two points in $A$ can be connected by a geodesic in $A$. It is easy to check that a geodesic metric space $X$ is an $\mathbb{R}$-tree (that is, a $0$-hyperbolic space in the sense of Gromov) if and only if the union of any two intersecting geodesic subspaces is again a geodesic subspace. In this paper, we prove an analogous characterization of general Gromov hyperbolic spaces, where we replace geodesic subspaces by quasigeodesic subspaces.
\end{abstract}

\maketitle

\section{Introduction}
Recall that a geodesic metric space $X$ is called \textbf{$\delta$-hyperbolic} (in the sense of Gromov) if for any three points $a$, $b$ and $c$ in $X$ and geodesics $[a ,b]$, $[b, c]$ and $[a, c]$ between them, the geodesic $[b,c]$ is contained in a $\delta$-neighbourhood of $[a, b] \cup [a, c]$. An example of a Gromov hyperbolic space is a metric tree with the path-length metric: such trees are $0$-hyperbolic spaces. More generally, geodesic metric spaces which are $0$-hyperbolic are called \textbf{$\mathbb{R}$-trees}. 

By a \textbf{geodesic subspace} of a metric space $X$ we mean a subset $A$ of $X$ such that any two points in $A$ are joined by a geodesic in $A$. The following observation is easy to verify.

\begin{Observation}\label{obs}
A metric space $X$ is an $\mathbb{R}$-tree if and only if for any geodesic subspaces $A$ and $B$ with $A \cap B \neq \varnothing$, $A\cup B$ is a geodesic subspace.
\end{Observation}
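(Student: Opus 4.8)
Throughout I assume, as the abstract makes explicit, that $X$ is a geodesic metric space; this hypothesis is genuinely needed, since the union condition holds vacuously in a space having no nondegenerate geodesics (there the only geodesic subspaces are points). Recall also that the image of a geodesic is compact, hence closed, so the $0$-neighbourhood of a finite union of geodesic segments is that union itself; thus the $0$-hyperbolicity condition reads simply: for every geodesic triangle one has $[b,c]\subseteq[a,b]\cup[a,c]$. I will treat the two implications separately, and the real content lies in the backward direction.

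For the forward implication (an $\mathbb{R}$-tree satisfies the union condition), let $A,B$ be geodesic subspaces sharing a point $p$, and take $u\in A$, $v\in B$; the cases where both points lie in one of $A,B$ are immediate. Since $A$ and $B$ are geodesic subspaces I may choose a geodesic $[p,u]\subseteq A$ and a geodesic $[p,v]\subseteq B$, and since $X$ is geodesic there is some geodesic $[u,v]$. Applying the $0$-hyperbolicity condition to the triangle with apex $p$ and these three sides gives $[u,v]\subseteq[p,u]\cup[p,v]\subseteq A\cup B$, exhibiting a geodesic from $u$ to $v$ inside $A\cup B$. Hence $A\cup B$ is a geodesic subspace, and no appeal to uniqueness of geodesics is even required here.

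For the backward implication I would first show that the union condition forces $X$ to be \emph{uniquely geodesic}; this is the step I expect to be the crux. Suppose $\alpha,\beta\colon[0,L]\to X$ are geodesics from $p$ to $q$, fix $s\in(0,L)$, and set $m=\alpha(s)$. The sets $A=\alpha([0,s])$ and $B=\beta([0,L])$ are geodesic subspaces (sub-segments of geodesics are) and they meet at $p$, so $A\cup B$ is a geodesic subspace and therefore contains a geodesic $\gamma$ from $m$ to $q$. The key observation is that $\gamma$ cannot backtrack into $A$: for $x=\alpha(u)$ with $0\le u\le s$ one computes $d(m,x)+d(x,q)=(s-u)+(L-u)$, which strictly exceeds $d(m,q)=L-s$ unless $u=s$, so the only point of $A$ lying on the geodesic $\gamma$ is $m$ itself. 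Consequently $\gamma\setminus\{m\}\subseteq B=\beta([0,L])$, and since $\beta([0,L])$ is closed we get $m=\gamma(0)\in\beta([0,L])$. As $s\in(0,L)$ was arbitrary this gives $\alpha([0,L])\subseteq\beta([0,L])$, and by symmetry $\alpha=\beta$.

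With uniqueness in hand the conclusion is immediate: given any $a,b,c$, the segments $[a,b]$ and $[a,c]$ are geodesic subspaces meeting at $a$, so $[a,b]\cup[a,c]$ is a geodesic subspace and hence contains a geodesic from $b$ to $c$; by uniqueness that geodesic is the only one, namely $[b,c]$, so $[b,c]\subseteq[a,b]\cup[a,c]$. This is exactly the $0$-hyperbolicity condition, and combined with $X$ being geodesic it makes $X$ an $\mathbb{R}$-tree. The only delicate point in the whole argument is the no-backtracking computation underpinning uniqueness; everything else is routine bookkeeping with the definitions.
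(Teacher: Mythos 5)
Your proof is correct. There is, however, nothing in the paper to compare it against: the paper states Observation~\ref{obs} as ``easy to verify'' and supplies no proof at all, so your write-up is filling a gap rather than paralleling an existing argument. Your structure is a natural one. The forward direction is the routine half: apply $0$-hyperbolicity to the triangle with apex at a common point $p$ of $A$ and $B$, and use closedness of geodesic images to identify the $0$-neighbourhood of $[p,u]\cup[p,v]$ with that set itself. The backward direction is where you add real content, and it checks out: the no-backtracking computation ($d(m,x)+d(x,q)=(s-u)+(L-u)>L-s$ unless $u=s$) correctly forces any geodesic from $m=\alpha(s)$ to $q$ inside $\alpha([0,s])\cup\beta([0,L])$ to leave $\alpha([0,s])$ immediately, and closedness of $\beta([0,L])$ then puts $m$ on $\beta$; applying the union hypothesis a second time to the sides $[a,b]$, $[a,c]$ converts unique geodesicity into the $0$-thin-triangle condition. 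Two cosmetic points you may wish to make explicit: the limit argument giving $\gamma(0)\in\beta([0,L])$ uses $m\neq q$, which holds because $s<L$; and ``by symmetry $\alpha=\beta$'' literally yields equality of \emph{images} --- to upgrade to equality of maps, note that if $\alpha(s)=\beta(s')$ then $s=d(p,\alpha(s))=d(p,\beta(s'))=s'$, although equality of images is all your final step actually requires. Your preliminary remark that the geodesicity hypothesis on $X$ is genuinely needed (it appears in the abstract but is omitted from the statement of the Observation) is also a worthwhile observation, since the union condition holds vacuously in, say, a two-point space.
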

In this paper, we give a characterization of Gromov hyperbolic spaces similar to the one for $\mathbb{R}$-trees given in the above observation.

\section{Main Result}
Let $X$ be a metric space. We say that a subset $A \subseteq X$ is a \textbf{$(\lambda, C)$-quasigeodesic} subspace if for every pair of points $a$ and $a'$ in $A$ there is a map $f: I \rightarrow A$ from an interval $I = [0,v]$ of the real line, such that $f(0) = a$, $f(v) = a'$ and for every pair of points $x$, $x'$ in $X$,
$$
\lambda^{-1}d(x,x') - C \leq d(f(x), f(x')) \leq \lambda d(x, x') + C.
$$ 
In other words, any pair of points in $A$ are joined by a $(\lambda, C)$-quasigeodesic in $A$ in the sense of \cite{Roe}. In particular, a geodesic subspace is the same as a $(1,0)$-quasigeodesic subspace.

\begin{Lemma}\label{1quasi}
Let $X$ be a $\delta$-hyperbolic geodesic metric space. Then for every $\lambda > 0$ and $C > 0$ there is a $C'>0$ such that for any $(\lambda, C)$-quasigeodesic $\gamma$, there is a $(1, C')$-quasigeodesic whose image is contained in that of $\gamma$ and which has the same endpoints. 
\end{Lemma}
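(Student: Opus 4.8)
The plan is to let the stability (Morse) lemma for quasigeodesics in a $\delta$-hyperbolic space do all the real work, and to produce the desired $(1,C')$-quasigeodesic by selecting points of $\gamma$ that shadow an arclength parametrization of a genuine geodesic between the endpoints. Write $\gamma\colon[0,v]\to X$ for the given $(\lambda,C)$-quasigeodesic, with $a=\gamma(0)$ and $a'=\gamma(v)$, and put $L=d(a,a')$. There is no loss in assuming $\lambda\ge 1$: if $\lambda<1$ then $\lambda^{-1}\ge 1$ forces $d(\gamma(t),\gamma(t'))\ge|t-t'|-C$ and $d(\gamma(t),\gamma(t'))\le\lambda|t-t'|+C\le|t-t'|+C$, so $\gamma$ is already a $(1,C)$-quasigeodesic and one may simply take $C'=C$ and $\gamma$ itself.

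Assuming $\lambda\ge 1$, the key input will be the stability of quasigeodesics in $\delta$-hyperbolic spaces (see \cite{Roe}): there is a constant $R=R(\delta,\lambda,C)$ so that the image of $\gamma$ and a geodesic $[a,a']$ lie within Hausdorff distance $R$ of each other. I would fix such a geodesic and parametrize it by arclength as $g\colon[0,L]\to X$, so that $g(0)=a$, $g(L)=a'$ and $d(g(s),g(s'))=|s-s'|$. Stability then guarantees, for each $s\in[0,L]$, a parameter $t_s\in[0,v]$ with $d(g(s),\gamma(t_s))\le R$; I would choose $t_0=0$ and $t_L=v$ so that the two endpoints are matched exactly.

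Next I would set $h\colon[0,L]\to X$, $h(s)=\gamma(t_s)$. By construction the image of $h$ is contained in that of $\gamma$, and $h(0)=a$, $h(L)=a'$. The two triangle-inequality estimates, using $d(g(s),h(s))\le R$ together with $d(g(s),g(s'))=|s-s'|$, give, for all $s,s'\in[0,L]$,
\[
|s-s'|-2R\ \le\ d(h(s),h(s'))\ \le\ |s-s'|+2R,
\]
so $h$ is a $(1,2R)$-quasigeodesic. Taking $C'=2R$ finishes the proof, and $C'$ depends only on $\delta$, $\lambda$ and $C$, as required.

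The hard part is entirely concentrated in the stability lemma; everything after that is elementary. The only points needing a little care are that the definition of quasigeodesic used here imposes no continuity, so the pointwise selection $s\mapsto t_s$ is legitimate without any measurability or continuity argument, and that this selection can be pinned down at the two endpoints. Beyond correctly invoking stability, I expect no further obstacle.
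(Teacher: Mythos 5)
Your proof is correct and follows essentially the same route as the paper's: invoke stability of quasigeodesics in a $\delta$-hyperbolic space to obtain a geodesic within Hausdorff distance $R$ of the image of $\gamma$, then define a new map on the geodesic's domain by selecting, for each geodesic point, a point of the image of $\gamma$ within $R$ of it (matching the endpoints exactly), which yields a $(1,2R)$-quasigeodesic contained in the image of $\gamma$. Your write-up is merely more explicit than the paper's (the $\lambda<1$ reduction and the spelled-out triangle-inequality estimates), but the idea is identical.
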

\begin{proof}
Let $\gamma$ be a $(\lambda, C)$-quasigeodesic. Since $X$ is hyperbolic, there is an $R>0$ depending only on $\lambda$ and $C$ such that the image of $\gamma$ has Hausdorff distance at most $R$ from a geodesic with the same endpoints (see e.g.~\cite{Roe}). Let $\gamma'$ be this geodesic, and define a new quasigeodesic $\gamma''$ with the same domain as $\gamma'$ by sending $x$ to a point in the image of $\gamma$ which is within $R$ of $\gamma'(x)$, choosing the endpoints to be the same. Then $\gamma''$ is a $(1, 2R)$-quasigeodesic whose image is contained in the image of $\gamma$.
\end{proof}

We are now ready to prove the main result. Note the similarity of conditions (1) and (2) with the condition in Observation~\ref{obs}.

\begin{Theorem} \label{main}
The following are equivalent for geodesic metric space $X$:
\begin{itemize}
\item[(1)] for every $C > 0$ there exists a $C'$ such that if $A,B \subseteq X$ are $(1, C)$-quasigeodesic subspaaces and $A \cap B \neq \varnothing$, then $A \cup B$ is a $(1, C')$-quasigeodesic subspace.
\item[(2)] for every $\lambda, C > 0$ there exists $\lambda', C' > 0$ such that if $A,B \subseteq X$ are $(\lambda, C)$-quasigeodesic subspaces and $A \cap B \neq \varnothing$, then $A \cup B$ is a $(\lambda', C')$-quasigeodesic subspace.
\item[(3)] $X$ is $\delta$-hyperbolic for some $\delta$.
\end{itemize}
\end{Theorem}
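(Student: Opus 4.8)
The plan is to prove the cyclic chain of implications $(2) \Rightarrow (1) \Rightarrow (3) \Rightarrow (2)$. Among these, $(2) \Rightarrow (1)$ is immediate: condition (1) is literally the special case $\lambda = 1$ of condition (2), except that (1) demands the union again be a \emph{$(1,C')$}-quasigeodesic subspace rather than a $(\lambda', C')$-one. To bridge this gap I would invoke Lemma~\ref{1quasi}: assuming (2), a union of two $(1,C)$-quasigeodesic subspaces is a $(\lambda', C')$-quasigeodesic subspace, and then Lemma~\ref{1quasi} lets me replace each connecting $(\lambda', C')$-quasigeodesic by a $(1, C'')$-quasigeodesic with the same endpoints and image inside the union, where $C''$ depends only on $\lambda', C'$ (hence only on $C$). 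This upgrades the union to a $(1, C'')$-quasigeodesic subspace, giving (1). Note this step already uses hyperbolicity through Lemma~\ref{1quasi}, so strictly I would route it as part of the argument once hyperbolicity is available, or observe that (2) trivially implies the weaker statement and handle the $(1,C)$ normalization via (3); I would state the implications carefully to avoid circularity.

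For $(1) \Rightarrow (3)$, the strategy is contrapositive: if $X$ is not hyperbolic, I would exhibit, for some fixed $C$, arbitrarily bad geodesic (hence $(1,0)$-quasigeodesic) subspaces $A$ and $B$ sharing a point whose union $A \cup B$ fails to be a $(1,C')$-quasigeodesic subspace for any uniform $C'$. The natural choice is to take $A$ and $B$ to be two geodesics (as geodesic subspaces) sharing a common endpoint, arranged so that the thin-triangles condition is badly violated. Concretely, non-hyperbolicity yields geodesic triangles $[a,b], [b,c], [a,c]$ in which a point on $[b,c]$ is arbitrarily far from $[a,b] \cup [a,c]$; I would set $A = [a,b]$ and $B = [a,c]$ (sharing the vertex $a$), and then find two points in $A \cup B$ — for instance one near $b$ and one near $c$, or the offending midpoint reached through the geodesics — whose only connecting paths inside $A \cup B$ must pass through $a$ and therefore have length comparable to the triangle's perimeter, while their actual distance in $X$ is small. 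This forces the additive defect $C'$ to grow without bound, contradicting (1). Making this precise is the crux: I must ensure that the detour forced through the shared point $a$ is quantitatively large relative to the ambient distance, which is exactly what the failure of the thin-triangles condition provides.

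For $(3) \Rightarrow (2)$, assume $X$ is $\delta$-hyperbolic and let $A, B$ be $(\lambda, C)$-quasigeodesic subspaces meeting at a point $p$. Given $a \in A$ and $a' \in B$, I would connect $a$ to $p$ inside $A$ by a $(\lambda, C)$-quasigeodesic and $p$ to $a'$ inside $B$ similarly, then concatenate to get a path in $A \cup B$ from $a$ to $a'$ with image in $A \cup B$. The concatenation of two quasigeodesics is a priori only a quasigeodesic in a weak sense, but in a hyperbolic space a concatenation of two quasigeodesics that go ``out to $p$ and back'' stays, up to bounded Hausdorff distance, near a genuine quasigeodesic from $a$ to $a'$; quantitatively, the concatenated path is an $(L, M)$-quasigeodesic for constants $L, M$ depending only on $\lambda, C, \delta$. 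The key mechanism is that the Gromov product $(a \mid a')_p$ is bounded by $\delta$ up to the quasigeodesic constants, so the detour through $p$ adds only a uniformly bounded amount to the length, keeping the lower quasigeodesic inequality under control. Reparametrizing the concatenation to a common interval domain and verifying both quasigeodesic inequalities then yields that $A \cup B$ is a $(\lambda', C')$-quasigeodesic subspace.

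The main obstacle I anticipate is the quantitative control in $(3) \Rightarrow (2)$: proving that the concatenation of two quasigeodesics meeting at $p$ is itself a quasigeodesic with \emph{uniform} constants. The subtlety is the lower bound $\lambda'^{-1} d(f(x),f(x')) - C' \le d(f(x),f(x'))$-type inequality, which can fail for naive concatenations when the two halves double back; hyperbolicity is precisely what rules out significant backtracking, via a bound on how far into $A$ (toward $p$) the geodesic $[a,a']$ must reach. I expect to formalize this using the thin-triangles inequality applied to the triangle with vertices $a$, $a'$, $p$, controlling the Gromov product $(a\mid a')_p$ and hence the length of the forced detour through $p$; this is the step where the full strength of $\delta$-hyperbolicity is used.
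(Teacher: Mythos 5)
Your proposal has genuine gaps in both substantive implications, and in each case the mechanism you rely on is not merely unproven but false.

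For (3) $\Rightarrow$ (2), your ``key mechanism'' --- that the Gromov product $(a \mid a')_p$ is uniformly bounded, so that concatenating the two quasigeodesics at the common point $p$ costs only a bounded amount --- is wrong. Take $X$ to be the hyperbolic plane and let $A$, $B$ be two geodesic lines through $p$ meeting at a small angle $\theta$; these are $(1,0)$-quasigeodesic subspaces for every $\theta$, yet for points $a \in A$, $a' \in B$ at distance $r = \log(1/\theta)+5$ from $p$ one has $d(a,a') \leq 10$ while $d(a,p)+d(p,a') = 2r \to \infty$ as $\theta \to 0$; that is, $(a \mid a')_p$ is unbounded even with $\lambda, C, \delta$ all fixed. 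The concatenation through $p$ then has domain length about $2r$ but endpoints at bounded distance, so it violates the lower quasigeodesic inequality for \emph{every} fixed $(\lambda', C')$ --- the multiplicative constant does not rescue it --- and it is not at bounded Hausdorff distance from $[a,a']$ either, since the corner at $p$ sticks out by about $(a \mid a')_p$. The backtracking you hope hyperbolicity excludes is really there. What hyperbolicity actually provides (and what the paper uses) is a point $c$ on $[a,a']$ within $\delta$ of both $[p,a]$ and $[p,a']$, hence, by quasigeodesic stability, points on the two quasigeodesics within $R+\delta$ of $c$; one splices the portion of the $A$-quasigeodesic from $a$ up to its near point with the portion of the $B$-quasigeodesic from its near point to $a'$, \emph{discarding} the two tails that run in to $p$. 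This cut-and-glue at the near point, not concatenation at $p$, is what yields a $(1, 4C+2R+2\delta)$-quasigeodesic.

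For (1) $\Rightarrow$ (3), your contrapositive plan fails for two reasons. First, quasigeodesics need not be continuous, so nothing forces a connecting quasigeodesic in $[a,b]\cup[a,c]$ to ``pass through $a$'': it may jump between the two sides wherever they come within roughly $1+C'$ of each other, and the fatness witness --- a point $m \in [b,c]$ far from $[a,b]\cup[a,c]$ --- does not even lie in your union and gives no control over how the two chosen sides approach one another. Second, and worse, your choice of apex is provably the wrong one in general: in the Euclidean plane take $a=(0,K)$, $b=(-K^3,0)$, $c=(K^3,0)$. The point $(0,0) \in [b,c]$ is at distance about $K$ from $[a,b]\cup[a,c]$, so these triangles are arbitrarily fat; yet every Gromov product $(p \mid q)_a$ with $p \in [a,b]$, $q \in [a,c]$ is $O(1/K)$, so the route through $a$ is nearly efficient and $[a,b]\cup[a,c]$ \emph{is} a $(1,2)$-quasigeodesic subspace for all large $K$. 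Fatness of a triangle simply cannot be detected by the union of the two sides from the opposite vertex. The missing idea is the paper's four-segment construction: delete from one side $[a,b]$ a gap of length $C+1$ around the point $x$ farthest from the other two sides, and apply the hypothesis (iterated over pairwise intersections) to $Y = [a,x_a]\cup[x_b,b]\cup[b,c]\cup[a,c]$. Because $[a,x_a]$ and $[x_b,b]$ lie on a \emph{single} geodesic, every point of one is at distance at least $C+1$ from every point of the other, so a $(\lambda,C)$-quasigeodesic from $x_a$ to $x_b$ cannot jump the gap; a connectedness argument on its domain forces it to land on $[a,c]\cup[b,c]$ at a point within $\lambda^2(2C+1)+C$ of $x_a$, which is exactly thinness. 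Finally, your routing of (2) $\Rightarrow$ (1) through Lemma~\ref{1quasi} is circular, as you half-acknowledge: the lemma presupposes hyperbolicity, and your only argument toward (3) starts from (1), not (2), so your cycle does not close. The paper avoids this by proving (3) $\Rightarrow$ (1), then (3) \& (1) $\Rightarrow$ (2), then (2) $\Rightarrow$ (3); note that its (2) $\Rightarrow$ (3) argument uses only geodesic pieces and uniform constants, so it runs verbatim with $\lambda = 1$ to give (1) $\Rightarrow$ (3) as well.
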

\begin{proof}
(3) $\implies$ (1): Let $A$ and $B$ be $(1, C)$-quasigeodesic subspaces of $X$, and let $w \in A \cap B$. Since there is a $(1, C)$-quasigeodesic joining any two points in $A$ and any two points in $B$, it is enough to consider $a \in A$ and $b \in B$. Since $X$ is $\delta$-hyperbolic, there is a point on the geodesic from $a$ to $b$ which is at most $\delta$ from the geodesic from $w$ to $a$ and the geodesic from $w$ to $b$. Moreover, there is an $R > 0$ depending only on $C$ such that a $(1, C)$-quasigeodesic from $w$ to $a$ (resp. $b$) has Hausdorff distance at most $R$ from a geodesic from $w$ to $a$ (resp. $b$). It follows that there is a point $c$ on the geodesic from $a$ to $b$ and points $a'$ and $b'$ on $(1, C)$-quasigeodesics from $w$ to $a$ in $A$ and $w$ to $b$ in $B$ respectively such that $d(a', c) \leq R + \delta$ and $d(b', c) \leq R + \delta$. Let $\gamma_a: [0, t_a] \to A$ and $\gamma_b: [0, t_b] \to B$ be the quasigeodesics in question. For convenience, we suppose $\gamma_a$ to be from $a$ to $w$ (so that $\gamma_a(0) = a$) while $\gamma_b$ is from $w$ to $b$ (so that $\gamma_b(t_b) = b$). Let $\gamma_a(s_a) = a'$ and $\gamma_b(s_b) = b'$. We will build a new quasigeodesic $\gamma: [0, t] \rightarrow X$ from $a$ to $b$, where $t = s_a + t_b - s_b$, whose image is contained in $A \cup B$. We define it as follows:
\begin{align*}
\gamma(x) = 
\begin{cases}
\gamma_a(x) & 0 \leq x \leq s_a \\
\gamma_b(x - s_a + s_b) & s_a \leq x  \leq t
\end{cases}
\end{align*}
If $x \in [0, s_a]$ and $y \in [s_a, t]$, then we have
$$
d(\gamma(x), \gamma(y)) \leq d(\gamma(x), \gamma(s_a)) + 2(R + \delta) + d(\gamma(s_a), y) \leq d(x, y) + 2C + 2(R + \delta)
$$
On the other hand,
$$
d(\gamma(x), \gamma(y)) \geq d(a,b) - d(a, \gamma(x)) - d(b, \gamma(y)) \geq d(a,b) - d(0, x) -  d(y, t) - 2C
$$
and 
$$
d(a,b) \geq d(a, a') - (R + \delta) + d(b', b) - (R + \delta) \geq d(0, s_a) - 2(R + \delta) + d(s_a, t) - 2C
$$
gives
\begin{align*}
d(\gamma(x), \gamma(y)) & \geq  d(0, s_a) - 2(R + \delta) + d(s_a, t)  - d(0, x) -  d(y, t) - 4C \\ & = d(x, y) - 4C - 2(R+\delta).
\end{align*}
so $\gamma$ is a $(1, 4C + 2R + 2\delta))$-quasigeodesic.

(3) \& (1) $\implies$ (2): By Lemma \ref{1quasi}, every $(\lambda, C)$-quasigeodesic subspace is also $(1, C')$-quasigeodesic for some $C'$ depending on $\lambda$ and $C$. 

(2) $\implies$ (3): Any connected union of four geodesic segments is, by hypothesis, a $(\lambda, C)$-quasigeodesic subspace for some fixed $\lambda$ and $C$. Consider a geodesic triangle $abc$ in $X$, and let $x$ be the point on $[a,b]$ (the geodesic from $a$ to $b$) which is furthest from $[b,c]$ and $[a,c]$. If $d(a,x) \leq C + 1$ or $d(x,b) \leq C + 1$, then $x$ is at most $C + 1$ away from one of the other two sides. Suppose $d(a,x) > C + 1$ and $d(b,x) > C + 1$. Let $x_a$ and $x_b$ be points on $[a,x]$ and $[x,b]$ respectively such that $d(x_a, x) = d(x_b, x) = (C + 1)/2$. The union $Y = [a,x_a]\cup [b,x_b] \cup [b,c] \cup [a,c]$ is a connected union of four geodesic subspaces, so it must be a $(\lambda, C)$-quasigeodesic subspace. In particular, there is a $(\lambda, C)$-quasigeodesic $\gamma$ joining $x_a$ and $x_b$ whose image is contained in $Y$. We claim that the image of $\gamma$ intersects $[a,c] \cup [b,c]$ at some point $z$. If this is the case, then
$$
d(x_a, z) \leq \lambda t + C \leq \lambda (\lambda (d(x_a, x_b) + C)) + C = \lambda^2(2C+1) + C
$$
where $t$ is the length of the interval which is the domain of $\gamma$. To prove the claim, note that for $x \in \gamma^{-1}([a, x_a])$ and $y \in \gamma^{-1}([b_, x_b])$, we have
$$
\lambda d(x,y) + C \geq d(\gamma(x), \gamma(y)) \geq C + 1 \implies d(x,y) \geq \lambda^{-1} > 0 
$$
so that by connectedness, $\gamma^{-1}([a, x_a])$ and $\gamma^{-1}([b_, x_b])$ cannot cover the domain of $\gamma$. Thus $X$ is $\delta$-hyperbolic, for $\delta = \lambda^2(2C+1) + C + 1$.
\end{proof}

\end{document}